\theoremstyle{plain} 
\newtheorem{theorem}    {Theorem}[section] 
\newtheorem{lemma}      [theorem]{Lemma}
\theoremstyle{definition}
\theoremstyle{remark}
\newtheorem{remark}              {Remark}
\newtheorem*{notation}            {Notation}
\numberwithin{equation}{section}
\def\C{\mathbb C}
\def\R{\mathbb R}
\begin{document}

\title[On the occurrence of Hecke eigenvalues in sectors]{On the occurrence of Hecke eigenvalues \\ in sectors} 

\author{Nahid Walji}

\subjclass[2020]{Primary 11F30 ; Secondary 11F41, 11F66}
\maketitle
\begin{abstract} Let $\pi$ be a non-self-dual unitary cuspidal automorphic representation not of solvable polyhedral type for ${\rm GL}(2)$ over a number field. We show that $\pi$ has a positive upper Dirichlet density of Hecke eigenvalues in any sector whose angle is at least 2.63 radians. 
\end{abstract}

\section{Introduction}
Let $\pi$ be a unitary cuspidal automorphic representation for ${\rm GL}(2)$ over a number field $F$. We assume that it is not of solvable polyhedral type, which means that it does not correspond to an Artin representation of dihedral, tetrahedral, or octahedral type. Associated to a finite place $v$ where $\pi$ is unramified, we have the multiset of Satake parameters $\{\alpha_v (\pi), \beta_v (\pi)\}$ and their sum is called the Hecke eigenvalue $a_v(\pi)$ of $\pi$ at $v$.

One can ask about the distribution of the sequence $(a_v(\pi))_v$.
If one restricts to automorphic representations that correspond to holomorphic forms, then more is known. For example the Sato-Tate conjecture has been proved for a wide range of Hilbert modular forms~\cite{ST}. In the general case however, much less is known.
For example, in an appendix to~\cite{Sh94}, J.-P.~Serre asked if, for self-dual $\pi$, it can be shown that there are infinitely many Hecke eigenvalues greater than a given positive constant $c$ (and similarly, if there are infinitely many Hecke eigenvalues less than a given negative constant $c'$). An answer to this was provided by Theorem~1.2 of~\cite{Wa18} with $c = 0.905$ and $c' = -1.164$.

In the case of when $\pi$ is a non-self-dual, one can extend the question as follows: For what angle $\theta$ can it be shown that there are infinitely many Hecke eigenvalues in any sector of size $\theta$? Furthermore, given any such sector, for what $c$ do we have infinitely many Hecke eigenvalues greater than size $c$?

A consequence of Theorem~1.3 of~\cite{Wa18} is that this holds true for $\theta = \pi$ radians, with $c = 0.5$. In this paper, we will improve the value of $\theta$ to 2.63 radians and improve $c$ to $0.595$.
\begin{theorem}\label{t1}
Let $\pi$ be a non-self-dual unitary cuspidal automorphic representation for ${\rm GL}(2)/F$, where $F$ is a number field, that is not of solvable polyhedral type. Then, for any angle $\phi$ we have that the following set of places 
\begin{align*}
\{v \mid {\rm arg}(a_v(\pi)) \in (\phi -1.314, \phi + 1.314)\}
\end{align*}
has positive upper Dirichlet density.
Furthermore, the subset of such places whose associated Hecke eigenvalue has a size of at least 0.595 also has positive upper Dirichlet density.
\end{theorem}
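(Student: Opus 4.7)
\medskip\noindent\textbf{Proof proposal.}
The plan is to use the Rankin--Selberg moment method. After rotating by $-\phi$ we may assume $\phi = 0$, so that the target becomes the sector $S = \{z \in \mathbb{C} : |\arg z| < 1.314\}$. The idea is to exhibit a real polynomial
\[
P(z, \bar z) = \sum_{0 \le i, j \le 4} c_{i,j}\, z^{i} \bar z^{j}
\]
of bi-degree at most $(4,4)$ that is non-negative on the admissible region of Satake parameters, vanishes on $S \cap \{|z| \ge 0.595\}$, and is bounded below by a positive constant on the complementary region inside this admissible set. The bi-degree cap of $4$ reflects what is available through the Gelbart--Jacquet lift and the Kim--Shahidi and Kim theorems on automorphy of $\mathrm{sym}^k\pi$ for $k \le 4$.

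Expanding $a_v = \alpha_v + \beta_v$ and applying the Clebsch--Gordan decomposition for $\mathrm{SU}(2)$, each monomial $a_v^i \overline{a_v}^{\,j}$ rewrites as a combination of products $\mathrm{tr}(\mathrm{sym}^p A_v)\, \overline{\mathrm{tr}(\mathrm{sym}^q A_v)}$ weighted by central character values. Summing against $Nv^{-s}$ and invoking Rankin--Selberg theory yields
\[
\sum_{v} \frac{\mathrm{tr}(\mathrm{sym}^p A_v)\, \overline{\mathrm{tr}(\mathrm{sym}^q A_v)}}{Nv^{s}} = \delta_{p,q} \log \frac{1}{s-1} + O(1), \qquad s \to 1^{+},
\]
where the off-diagonal vanishing uses the hypotheses that $\pi$ is non-self-dual and not of solvable polyhedral type, which ensure $\mathrm{sym}^p\pi \not\cong \mathrm{sym}^q\pi$ for $p \ne q$ (hence $L(s, \mathrm{sym}^p\pi \times \overline{\mathrm{sym}^q\pi})$ is entire and non-vanishing on $\mathrm{Re}(s)=1$). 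Summing with the coefficients from $P$ produces an asymptotic of the form $\sum_v P(a_v, \overline{a_v})/Nv^{s} = C_P \log\frac{1}{s-1} + O(1)$, where $C_P$ is an explicit quantity determined by the diagonal coefficients of $P$.

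Now assume for contradiction that the set of places $v$ with $a_v(\pi) \in S$ and $|a_v(\pi)| \ge 0.595$ has upper Dirichlet density zero. By the vanishing of $P$ on this set and its uniform positive lower bound on the complement, the density of ``bad'' places (those actually contributing to $P$) forces the left-hand side to grow strictly faster than $C_P \log\frac{1}{s-1}$ as $s \to 1^{+}$, contradicting the $L$-function identity above. This would prove both halves of the theorem simultaneously: the magnitude bound $|a_v(\pi)| \ge 0.595$ is not a separate assertion but the built-in vanishing domain of $P$, and the weaker statement (without the magnitude condition) follows a fortiori.

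The hard part is the explicit extremal construction of $P$. One must simultaneously (i) stay within bi-degree $(4,4)$, (ii) vanish on $S \cap \{|z| \ge 0.595\}$, (iii) remain non-negative on the admissible region, and (iv) have a uniform positive lower bound on the complement strictly exceeding $C_P$ in the resulting Dirichlet series comparison. Within a one- or two-parameter family of candidates indexed by the half-angle of the vanishing sector (and the minimal admitted modulus), I expect the values $1.314$ and $0.595$ to emerge as the extremal thresholds at which such a $P$ still exists with the required positive-definite balance. This kind of trigonometric extremal optimization has direct precedent in Theorem~1.3 of~\cite{Wa18}; the present refinement should follow from a sharper numerical balancing that exploits the full flexibility of a general bi-degree $(4,4)$ polynomial rather than a more constrained subfamily.
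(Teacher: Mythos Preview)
There is a fatal obstruction at the very first step: a nonzero real polynomial $P(z,\bar z)$ in two real variables cannot vanish on $S\cap\{|z|\ge 0.595\}$, since that set has nonempty interior in $\mathbb{R}^2$. So no test function of the shape you describe exists, and the contradiction argument built on it collapses. (Even setting this aside, the logic of the contradiction is not right: if the ``good'' set has density zero and $P\ge c>0$ on the complement, you get a \emph{lower} bound $c\cdot\ell(s)$ on the sum, not growth ``strictly faster'' than $C_P\ell(s)$; a contradiction would require the separate inequality $c>C_P$, which you never arrange.) There is also a second gap in your orthogonality claim $\sum_v \mathrm{tr}(\mathrm{sym}^p A_v)\overline{\mathrm{tr}(\mathrm{sym}^q A_v)}/Nv^s=\delta_{p,q}\ell(s)+O(1)$. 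The Clebsch--Gordan expansion of $a_v^i\overline{a_v}^{\,j}$ produces central-character twists, so the relevant Rankin--Selberg factors are $L(s,\mathrm{Sym}^p\pi\times\widetilde{\mathrm{Sym}^q\pi}\otimes\omega^m)$, and these can have poles off the diagonal when $\mathrm{Sym}^p\pi$ admits a self-twist by a power of $\omega$. For $p=4$ there is no known criterion ruling this out, which is exactly why the paper's Section~\ref{sseck8} carries out a delicate case analysis on the order $r$ of $\omega$.

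The paper's actual route is quite different from a single test-polynomial. It first computes or bounds the moments $\sum_v(\mathrm{Re}(e^{i\phi}a_v))^k/Nv^s$ for $k=4,6,8$ (exact for $k=4$, upper bounds for $k=6,8$), with constants $q_4,q_6,q_8$ depending on $r$. A Cauchy--Schwarz scheme (roughly $\mathrm{ls}(S,4)^2\le \mathrm{ls}(S,8)\cdot\overline\delta(S)$ and $\mathrm{ls}(T,3)^2\le \mathrm{ls}(T,6)\cdot\overline\delta(T)$, interlocked through an auxiliary parameter $d$) then forces, for $r\ge 6$, a set of upper density at least $1/234$ on which $\mathrm{Re}(a_ve^{-i\phi})>0.595$. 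Independently, Lemma~\ref{kslem} bounds $\overline\delta\{v:|a_v|>Q\}$ and shows this is below $1/234$ once $Q>2.341$. The angle $1.314$ is then nothing more than $\cos^{-1}(0.595/2.341)$: it is a geometric consequence of combining the real-part lower bound with the modulus upper bound, not the output of a sector-adapted polynomial. For $r\le 5$ the paper uses an entirely separate observation---unitarity forces $\arg(a_v)$ onto finitely many rays $\mu/2+\mathbb{Z}\pi$ indexed by $r$th roots of unity---together with the same moment machinery.
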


\section{Asymptotic properties of certain Dirichlet series}\label{dsec}

In this section, we assume that $\pi$ is a cuspidal automorphic representation for ${\rm GL}(2)/F$ that is not self-dual and not of solvable polyhedral type.

\begin{notation}
Denote by $X = X (\pi)$ the set of archimedean places as well as places at which $\pi$ is ramified.
Values of $k$ will be associated to our examination of the asymptotic behaviour of $$\sum_{v \not \in X} {\rm Re}(e ^{i \phi}a_v(\pi))^k {\rm N}v^{-s},$$
as $s \rightarrow 1^+$,
for $k = 3,4,6,8$, where $\phi$ is any fixed angle in $[0,2 \pi)$. Let $\omega$ be the central character of $\pi$ and denote the order of this character by $r$. Lastly, we will write $\ell (s) := \log (1/(s-1))$.
\end{notation}

We will repeatedly make use of the bounds towards the Ramanujan conjecture of Kim--Sarnak~\cite{Ki03} (in the rational case) and Blomer--Brumley~\cite{BB11} (for number fields). We will also need the functoriality results of Gelbart--Jacquet~\cite{GJ78}, Kim--Shahidi~\cite{KS00,KS02}, and Kim~\cite{Ki03}, regarding the symmetric square, cube, and fourth power lifts of cuspidal automorphic representations for {\rm GL}(2). 

\subsection{$k = 3$}

We consider incomplete $L$-functions of the form $L^X(s, \pi ^m\times \overline{\pi}^n)$ where $m,n$ are non-negative integers and $m+n = 3$.

In the case $(m,n)= (2,1)$, making use of Clebsch--Gordan decompositions and the unitary of $\pi$, we obtain
\begin{align*}
L^X(s, \pi \times \pi \times \overline{\pi}) = L^X(s, {\rm Sym}^3 \pi \otimes \omega ^{-1}) L^X(s, \pi)^2 
\end{align*}
where $\omega$ is the central character of $\pi$. Taking logarithms and using the bounds towards the Ramanujan conjecture~\cite{Ki03,BB11} we obtain 
\begin{align*}
\sum_{v \not \in X}\frac{a_v(\pi) ^2  \overline{a_v(\pi)}}{{\rm N}v^s} = O(1),
\end{align*}
as $s \rightarrow 1^+$.

Using a similar approach for the other cases, we see that the same asymptotic behaviour occurs for $\sum_{v \not \in X}a_v(\pi) ^m \overline{a_v(\pi)^n}  {\rm N}v^{-s} $ for $(m,n) = (3,0), (1,2),$ and $(0,3)$.
Therefore, for any $\phi \in [0,2 \pi)$,
\begin{align*}
\sum_{v \not \in X}\frac{{\rm Re}(e ^{i \phi}a_v(\pi))^3}{{\rm N}v^s} = \frac{1}{2 ^3} &\left(\sum_{v \not \in X}\frac{e ^{3i \phi}a_v(\pi)^3}{{\rm N}v^s}
+3 \sum_{v \not \in X}\frac{e ^{i\phi}a_v(\pi)^2 \overline{a_v(\pi)}}{{\rm N}v^s} \right. \\
&+ \left. 3\sum_{v \not \in X}\frac{e ^{-i\phi}a_v(\pi) \overline{a_v(\pi)^2}}{{\rm N}v^s}
+ \sum_{v \not \in X}\frac{e ^{-3i \phi} \overline{a_v(\pi)^3}}{{\rm N}v^s}\right)\\
= O(1) &
\end{align*}
since each of the four series on the right-hand side is bounded as $s \rightarrow 1^+$.

\subsection{$k = 4$:} \label{sseck4} Using the same approach as in the $k = 3$ case, we find that 
\begin{align*}
L^X(s, \pi \times \pi \times \pi \times \pi) = L^X(s, {\rm Sym}^4 \pi)L^X(s, {\rm Sym}^2 \pi \otimes \omega)^3 L^X(s, \omega ^2)^2.
\end{align*}
Therefore, if $\pi$ has central character of order two,
$$\sum_{v \not \in X}a_v(\pi) ^4 {\rm N}v^{-s} = 2 \ell(s) + O(1)$$ as $s \rightarrow 1^+$. If not, then the series is bounded in that limit.

We also note  
\begin{align*}
L^X(s, \pi \times \pi \times \pi \times \overline{\pi}) = L^X(s, {\rm Sym}^4 \pi \otimes \omega ^{-1})L^X(s, {\rm Sym}^2 \pi)^3 L^X(s, \omega)^2,
\end{align*}
which then implies 
\begin{align*}
\sum_{v \not \in X}{a_v(\pi) ^3 \overline{a_v(\pi) }} {{\rm N}v^{-s}} = O(1)
\end{align*}
as $s \rightarrow 1^+$, since $\pi$ is not self-dual.
We similarly obtain
\begin{align*}
\sum_{v \not \in X} {a_v(\pi) ^2 \overline{a_v(\pi)^2 }}{ {\rm N}v^{-s}} = 2 \ell(s) + O(1),
\end{align*}
and we conclude
\begin{align*}
\sum_{v \not \in X}{{\rm Re}(e ^{i \phi}a_v(\pi))^4}{{\rm N}v^{-s}} =  q_4 \cdot \ell(s) + O(1) ,
\end{align*}
where 
\begin{align*}
q_4=q_4 (\pi,\phi) = 
\begin{cases}
 \frac{3 + \cos 4\phi}{4}, &\text{ if }r=2,\\
 \frac 34, &\text{ if } r \geq 3.
\end{cases}
\end{align*}
\subsection{$k = 6$:}\label{sseck6}

Note that the incomplete $L$-function 
$L^X(s, \pi ^{\times m}\times \overline{\pi}^{\times n})$, for non-negative integers $m + n = 6$, can be expressed as
\begin{align}\label{eq1}
 L^X(s, {\rm Sym}^3 \pi \times {\rm Sym}^3 \pi \otimes \omega ^{-n})L^X(s, {\rm Sym}^3 \pi \times \pi \otimes \omega ^{1-n})^4 L^X(s, \pi \times \pi \otimes \omega ^{2-n})^4 
\end{align}
and also as 
\begin{align}\label{eq2}
&L^X(s, {\rm Sym}^4 \pi \times {\rm Sym}^2 \pi \otimes \omega ^{-n})L^X(s, {\rm Sym}^4 \pi \otimes \omega ^{1-n}) \\ \notag 
& \cdot L^X(s, ({\rm Sym}^2 \pi \otimes \omega ^{1-n})\times {\rm Sym}^2 \pi)^3  L^X(s, {\rm Sym}^2 \pi \otimes \omega ^{2-n})^5 L^X(s, \omega ^{3-n})^2.
\end{align}
The first and third $L$-functions in equation~\ref{eq1} are either invertible at $s=1$ or have a simple pole there. The second $L$-function is invertible at $s=1$. Therefore, equation~\ref{eq1} either is invertible at $s=1$, or has a pole of order 1, 4, or 5 there.
The third and fifth $L$-functions in~\ref{eq2} are either invertible at $s=1$ or have a simple pole there. The rest are invertible at $s=1$. Therefore, at $s=1$ equation~\ref{eq2} is either invertible there or has a pole of order 2, 3, or 5.
So $L^X(s, \pi ^{\times m}\times \overline{\pi}^{\times n})$ is either invertible at $s=1$ or has a pole of order 5. In the latter case, this holds if and only if $\omega ^{3-n}= 1$. 

If $r = 2$, then the incomplete $L$-function $L^X(s, \pi ^{\times m}\times \overline{\pi}^{\times n})$ has a pole of order 5 exactly when $n = 1,3,5$. If $r = 3$, then this $L$-function has a pole of order 5 exactly when $n = 0,3,6$. If $r \geq 4$, then it has a pole of order 5 only when $n = 3$.

Denote by $\alpha_v (\pi)$ and $\beta_v (\pi)$ the Satake parameters of $\pi$ at $v$. Taking logarithms and applying the known bounds on the size of the Satake parameters, we obtain:
\begin{align*}
\sum_{v \not \in X}\sum_{t = 1,2} \frac{(\alpha_v(\pi) ^t + \beta_v(\pi) ^t)^6 \omega_v^{-tn}}{{\rm N}v^{st}}=
\begin{cases} O(1), & \text{ if } r = 2 \text{ and } n = 1,3,5,  \\
   & \text{ if } r = 3 \text{ and } n = 0,3,6, \\
& \text{ or if } r \geq 4 \text{ and } n \neq 3. \\
5 \ell(s) + O(1), &\text{ if } r = 2 \text{ and } n = 0,2,4,6, \\
& \text{ if } r = 3 \text{ and } n = 1,2,4,5, \\
& \text{ or if } r \geq 4 \text{ and } n=3. \end{cases}
\end{align*}
So for any angle $\phi \in [0,2 \pi)$, 
\begin{align}\label{bigeq}
\frac{1}{2 ^6}{\sum_{n = 0}^{6}} \ ^6C_n \sum_{v \not \in X}\sum_{t = 1,2} \frac{(\alpha_v(\pi) ^t + \beta_v(\pi) ^t)^6 \omega_v^{-tn} }{{\rm N}v^{st}}e^{i (6-2n)\phi} \\ \notag 
=\begin{cases}
\frac{5}{16}(3 \cos 4 \phi+ 5) \cdot \ell(s)  + O(1), &\text{ if }r = 2, \\
\frac{5}{32}(\cos 6 \phi+ 10) \cdot \ell(s)  + O(1), &\text{ if }r = 3, \\
\frac{25}{16} \cdot \ell(s)  + O(1), &\text{ if }r \geq 4, \\
\end{cases}
 \end{align}
 as $s \rightarrow 1^+$.
 
We also note that the left-hand side of equation \ref{bigeq} above is equal to
\begin{align*}
\frac{1}{2 ^6} \sum_{v \not \in X} \sum_{t = 1 }^{ 2} \frac{(\alpha_v (\pi) ^t + \beta_v (\pi) ^t)^6}{{\rm N}v^{st}}(e^{i \phi} + \omega_v^{-t} e^{-i\phi})^6 .
\end{align*}
Since $$(\alpha_v (\pi) ^t + \beta_v (\pi) ^t)(e^{i \phi} + \omega_v^{-t} e^{-i \phi}) = (\alpha_v (\pi) ^t + \beta_v (\pi) ^t)e^{i \phi}+ (\overline{\alpha_v (\pi) ^t + \beta_v (\pi) ^t} )e^{-i \phi}$$
 we know that 
 \begin{align*}
 \sum_{v \not \in X} \frac{(\alpha_v (\pi) ^2 + \beta_v (\pi) ^2)^6}{{\rm N}v^{2s}}(e^{i \phi} + \omega_v^{-2} e ^{-i\phi})^6 
 \end{align*}
 is non-negative.
We conclude 
\begin{align}\label{k6eq}
\sum_{v \not \in X} \frac{{\rm Re}(a_v(\pi))^6}{{\rm N}v^s} \leq q_6 \cdot \ell(s) + O(1).
\end{align}
where we can choose
\begin{align*}
q_6 = q_6(\pi) = 
\begin{cases}
\frac 5 2, & \text{ if } r = 2, \\
\frac{55}{32}, & \text{ if }r = 3, \\
\frac{25}{16}, & \text{ if }r \geq 4.
\end{cases}
\end{align*}

\subsection{$k=8$:}\label{sseck8}
For non-negative integers $m + n = 8$, we have   
\begin{align}\label{k8mneq}
&L^X(s, \pi ^{\times m} \times \overline{\pi}^{\times n}) \\ \notag 
=& L^X(s, \pi ^{\times 8} \otimes \omega ^{-n})  \\ \notag 
=&L^X(s, {\rm Sym}^4 \pi \times {\rm Sym}^4 \pi \otimes \omega ^{-n})L^X(s, {\rm Sym}^2 \pi \times {\rm Sym}^2 \pi \otimes \omega ^{2-n})^9
L^X(s, \omega ^{4-n})^4 \\ \notag 
&\cdot L^X(s, {\rm Sym}^4 \pi \times {\rm Sym}^2 \pi \otimes \omega ^{1-n})^6
L^X(s, {\rm Sym}^4 \pi \otimes \omega ^{2-n})^4 L^X(s, {\rm Sym}^2 \pi \otimes \omega ^{3-n})^{12}.
\end{align}

$L^X(s, {\rm Sym}^4 \pi \times {\rm Sym}^4 \pi \otimes \omega ^{-n})$ has a simple pole at $s=1$ when $n = 4$. If it has a pole for other values of $n$, then either it means that ${\rm Sym}^4 \pi$ admits a self-twist, or $\omega$ has order less than or equal to four.
Since there is no known characterisation of when ${\rm Sym}^4 \pi$ admits a self-twist, we examine different cases in terms of the possible order of the central character. If we assume that $L^X(s, {\rm Sym}^4 \pi \times {\rm Sym}^4 \pi \otimes \omega ^{-n})$ has a simple pole at $s=1$, then
$${\rm Sym}^4 \pi \otimes \omega ^{-n} \simeq \widetilde{{\rm Sym}^4 \pi}.$$ Considering the central characters of each side, we obtain $\omega ^{10-5n} = \omega ^{-10}$ and so $\omega$ has order dividing $(20-5n)$.

At this stage, we consider all the different possible pairs of values of  $(r,n)$ for which the incomplete $L$-function $L^X(s, {\rm Sym}^4 \pi \times {\rm Sym}^4 \pi \otimes \omega ^{-n})$ may have a (simple) pole. We mention a few cases explicitly here:
If $r = 2$, then $L^X(s, {\rm Sym}^4 \pi \times {\rm Sym}^4 \pi \otimes \omega ^{-n})$ has a simple pole when $n$ is even and is invertible otherwise. If $r = 3$, then the $L$-function has a pole exactly when $n = 1,4,7$. If $r=4$, there is a pole exactly when $n = 0,4,8$, and if $r = 5$, we cannot rule out the existence of a pole for any value of $n$.

For $L^X(s, {\rm Sym}^2 \pi \times {\rm Sym}^2 \pi \otimes \omega ^{2-n})$, we note that Theorem 2.2.2 of~\cite{KS02} states that for non-dihedral $\pi$, the adjoint lift of $\pi$ admits a self-twist if and only if ${\rm Sym}^3 \pi$ is not cuspidal. However, we have assumed that $\pi$ is not of solvable polyhedral type which means that its symmetric cube lift must be cuspidal, so its adjoint lift, and thus its symmetric square lift, cannot admit a non-trivial self-twist. We now consider the cases of the different values of $r$: If $r = 2$, then the $L$-function has a pole when $n$ is even and is invertible otherwise. If $r = 3$, the $L$-function has a pole exactly when $n = 1,4,7$. If $r = 4$, the $L$-function has a pole exactly when $n = 0,4,8$. Lastly, if $r \geq 5$, then the $L$-function only has a  pole when $n = 4$.

For $L^X(s, \omega ^{4-n})$, the analysis has the exact same outcomes as in the paragraph directly above.

Finally, we note that the last three $L$-functions in equation (\ref{k8mneq}), namely,
\begin{align*}
&L^X(s, {\rm Sym}^4 \pi \times {\rm Sym}^2 \pi \otimes \omega ^{1-n}),\\
&L^X(s, {\rm Sym}^4 \pi \otimes \omega ^{2-n}), \text{ and }\\
&L^X(s, {\rm Sym}^2 \pi \otimes \omega ^{3-n}),
\end{align*}
are all invertible at $s=1$.

We consider 
\begin{align}\label{ank}
A(n,r):= \sum_{v \not \in X}\sum_{t = 1 }^{ 8} \frac{(\alpha_v(\pi) ^t + \beta_v(\pi) ^t)^8 \omega_v^{-tn}}{{\rm N}v^{st}}
\end{align}
If $n = 0$, then from the discussion above on the possible existence (and order) of poles at $s=1$ of the various $L$-functions, we find that equation \ref{ank} is bounded as $s \rightarrow 1^+$ when $r \neq 2,4,5,10,20$. In the case where $r = 2$ or $4$, we have $$A(n,r) =  14 \cdot \ell(s) + O(1),$$ and
in the case where $r = 5,10,$ or $20$, we have $$A(n,r) \leq  \ell(s) + O(1).$$ We proceed similarly in considering other values of $n$ and $r$, recording the asymptotic behaviour of $A(n,r)$ in the table below:
\begin{center}
\begin{tabular}{ |c|c|c| } 
 \hline \textbf{n} & \textbf{r} & \textbf{A(n,r)} \\ 
 \hline \hline 0 or 8 & $ 2,4$& $14 \ell(s) + O(1)$ \\
 \hline  & $ 5,10,20$& $\leq \ell(s) + O(1)$ \\
 \hline  & otherwise 
 & O(1)  \\ 
\hline 1 or 7 & $3$ & $14 \ell(s) + O(1)$ \\
 \hline  & $5,15$ & $\leq \ell(s) + O(1)$ \\
\hline  & otherwise & O(1)  \\
\hline 2 or 6 & $2$ & $14 \ell(s) + O(1)$  \\
 \hline  & $5,10$ & $\leq \ell(s) + O(1)$ \\
 \hline  & otherwise & O(1)  \\
 \hline 3 or 5 & $5$ & $\leq \ell(s) + O(1)$ \\
 \hline  & otherwise & O(1)  \\
  \hline 4 & all & $ 14 \ell(s) + O(1)$  \\
\hline 
\end{tabular}
\end{center}

We can use the above to establish asymptotic bounds on 
\begin{align*}
{\sum_{n = 0}^{8}}  \sum_{v \not \in X}\sum_{t = 1 }^{ 8} \ ^8C_n \frac{(\alpha_v(\pi) ^t + \beta_v(\pi) ^t)^8 \omega_v^{-tn} }{{\rm N}v^{st}}e^{i (8-2n)\phi}.
\end{align*}
We scale the left-hand side of equation above by $1/2^8$ and use positivity to obtain
\begin{align}
 \sum_{v \not \in X} \frac{({\rm Re}(a_v(\pi) e^{i\phi}))^8}{{\rm N}v^s}\label{k8eq}
&\leq  \sum_{v \not \in X} \sum_{t = 1 }^{ 8} 
\frac{({\rm Re}(\alpha_v(\pi)^t e^{i\phi} + \beta_v(\pi)^t e^{i\phi}))^8}{{\rm N}v^{st}} \\
&\leq q_8 \cdot \ell(s) + O(1), \notag 
\end{align}
as $s \rightarrow 1^+$, where 
\begin{align*}
2^8 \cdot q_8=
\begin{cases}
1792 , & \text{ if } r = 2, \\
1204  , & \text{ if } r = 3, \\
1008 , & \text{ if } r = 4 ,\\
1166 , & \text{ if } r = 5, \\
1038 , & \text{ if } r= 10, \\
996 , & \text{ if } r= 15 ,\\
982 , & \text{ if } r = 20,\\
980 , &\text{ otherwise. }
\end{cases}
\end{align*}

\begin{remark}
These bounds appear to be best possible given current knowledge; in particular, there is no known characterisation for when a symmetric fourth power lift from GL(2) admits a self-twist (in contrast to, say, the symmetric square and cube cases, which are well-understood). For context, if we assumed the Ramanujan conjecture, then for any $r \geq 6$ with $r \neq 10,15,20$, the left-hand side of equation (\ref{k8eq}) would have a lower bound of $(980/2^8)\cdot  \ell(s) + O(1)$.
\end{remark}

\section{Bounding subsets of Hecke eigenvalues}
First we recall that the upper and lower Dirichlet densities of a set $S$ of places of a number field $F$ are defined as 
\begin{align*}
\overline{\delta}(S) = \limsup_{s \rightarrow 1^+} \frac{\sum_{v \in S}{\rm N}v^{-s}}{\log (1/(s-1))}
\end{align*}
and 
\begin{align*}
\underline{\delta}(S) = \liminf_{s \rightarrow 1^+} \frac{\sum_{v \in S}{\rm N}v^{-s}}{\log (1/(s-1))},
\end{align*}
respectively, and note that these are equal if and only if the set has a Dirichlet density $\delta (S)$. 

\subsection{Absolute value of Hecke eigenvalues}
The following lemma simply arises from adjusting the proof of Theorem~4.1 from~\cite{KS02}. 
\begin{lemma} \label{kslem} Let $Q =r/s \geq 2$  be a rational number, where $r,s$ are positive integers. Then, for any unitary cuspidal automorphic representation $\pi$ for ${\rm GL}(2)$ over a number field,
we have
\begin{align*}
\overline{\delta}\{v \mid |a_v (\pi)| > Q\} \leq \frac{1}{1 + (Q ^2 -1)^2 + (Q ^4 -3 Q ^2 + 1)^2}.
\end{align*}
\end{lemma}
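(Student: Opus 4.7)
The plan is to adapt the proof of Kim--Shahidi's Theorem~4.1 in~\cite{KS02}, which packages the relevant symmetric-power Rankin--Selberg information into a single non-negative sum. Since $\pi$ is unitary cuspidal and not of solvable polyhedral type, both ${\rm Sym}^2 \pi$ and ${\rm Sym}^4 \pi$ are cuspidal on ${\rm GL}(3)$ and ${\rm GL}(5)$ respectively. Hence the three Rankin--Selberg $L$-functions $L^X(s, \pi \times \widetilde{\pi})$, $L^X(s, {\rm Sym}^2 \pi \times \widetilde{{\rm Sym}^2 \pi})$, and $L^X(s, {\rm Sym}^4 \pi \times \widetilde{{\rm Sym}^4 \pi})$ each have a simple pole at $s=1$, while every cross $L$-function that appears below (namely $L^X(s, {\rm Sym}^n \pi \otimes \omega^{-k})$ for $n \in \{2, 4\}$ and $L^X(s, ({\rm Sym}^2 \pi \otimes \omega^{-1}) \times ({\rm Sym}^4 \pi \otimes \omega^{-2}))$) is invertible at $s=1$, since the degrees of the cuspidal factors involved do not match.

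A key observation is that, for unitary cuspidal $\pi$, the identification $\widetilde{\pi} \simeq \overline{\pi}$ gives the Satake identity $\overline{a_v(\pi)} = a_v(\pi)/\omega_v$, so that $b_v^2 := a_v(\pi)^2/\omega_v = |a_v(\pi)|^2$ is a non-negative real number. Using the Clebsch--Gordan identities
\begin{align*}
a_v({\rm Sym}^2 \pi) = \omega_v (b_v^2 - 1), \qquad a_v({\rm Sym}^4 \pi) = \omega_v^2 (b_v^4 - 3 b_v^2 + 1),
\end{align*}
taking logarithms of the three Rankin--Selberg $L$-functions above, and applying the Kim--Sarnak~\cite{Ki03} and Blomer--Brumley~\cite{BB11} bounds to dispose of prime-power contributions, for any real $c_0, c_2, c_4$ one obtains
\begin{align*}
\sum_{v \not \in X} \frac{\bigl(c_0 + c_2(b_v^2 - 1) + c_4(b_v^4 - 3 b_v^2 + 1)\bigr)^2}{{\rm N}v^s} = (c_0^2 + c_2^2 + c_4^2)\,\ell(s) + O(1)
\end{align*}
as $s \rightarrow 1^+$. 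Specializing $(c_0, c_2, c_4) = (1,\, Q^2 - 1,\, Q^4 - 3 Q^2 + 1)$ and writing $D := 1 + (Q^2 - 1)^2 + (Q^4 - 3 Q^2 + 1)^2$, the right-hand side becomes $D \cdot \ell(s) + O(1)$.

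The summand on the left equals $\tilde{P}(|a_v(\pi)|^2)^2$ where $\tilde{P}(y) := 1 + (Q^2 - 1)(y - 1) + (Q^4 - 3 Q^2 + 1)(y^2 - 3 y + 1)$. A direct expansion gives $\tilde{P}(Q^2) = D$; moreover $\tilde{P}'(Q^2) = (Q^2 - 1) + (Q^4 - 3 Q^2 + 1)(2 Q^2 - 3) > 0$ for $Q \geq 2$, and $\tilde{P}''(y) = 2 (Q^4 - 3 Q^2 + 1) > 0$ throughout, so $\tilde{P}$ is strictly increasing on $[Q^2, \infty)$ and satisfies $\tilde{P}(y) \geq D$ there. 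Hence at every unramified $v$ with $|a_v(\pi)| > Q$ the summand is at least $D^2$, giving
\begin{align*}
D^2 \sum_{\substack{v \not\in X \\ |a_v(\pi)| > Q}} {\rm N}v^{-s} \leq D \cdot \ell(s) + O(1).
\end{align*}
Dividing by $D^2 \ell(s)$ and taking $\limsup_{s \rightarrow 1^+}$ yields the claimed density bound.

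The main adjustment to Kim--Shahidi's original argument (which treats $\pi$ self-dual with real Hecke eigenvalues) is this observation that, for any unitary cuspidal $\pi$, the combination $a_v(\pi)^2/\omega_v$ is automatically real and non-negative, being equal to $|a_v(\pi)|^2$. This reduces the pointwise lower bound to a one-variable polynomial-monotonicity argument, which follows at once from the explicit derivatives, and dispenses with any assumption on the self-duality of $\pi$ or on the order of its central character.
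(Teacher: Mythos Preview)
Your argument is correct and follows the same strategy as the paper: both form a non-negative combination of $1$, ${\rm Ad}\,\pi$, and $\omega^{-2}\otimes{\rm Sym}^4\pi$ and bound the resulting Rankin--Selberg sum, the paper packaging this via an isobaric representation and Ramakrishnan's density inequality where you unpack the Dirichlet series directly (and you supply the monotonicity check on $\tilde P$ that the paper leaves implicit). The only omission is the solvable polyhedral case, which the paper dispatches in one line by observing that such $\pi$ correspond to Artin representations and hence satisfy Ramanujan, so the set $\{v : |a_v(\pi)| > Q\}$ is empty for $Q \geq 2$.
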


\begin{proof}
If $\pi$ is of solvable polyhedral type, then we know that it corresponds to an Artin representation~\cite{La80, Tu81} and therefore satisfies the Ramanujan conjecture, so the inequality holds.
If $\pi$ is not of solvable polyhedral type, we know that its adjoint and symmetric fourth power lifts are cuspidal. We construct the following isobaric automorphic representation 
\begin{align*}
\eta = s^4 \alpha \cdot \textbf{1}\boxplus s^4\beta \cdot {\rm Ad}\pi \boxplus s^4 \gamma \cdot (\omega ^{-2}\otimes {\rm Sym}^4 \pi)
\end{align*}
where $\alpha,\beta,\gamma$ are non-negative integers whose values will be determined later.

Now 
\begin{align*}
a_v(\eta)  &= s^4\alpha + s^4\beta (|a_v (\pi)|^2 - 1) + s^4\gamma (|a_v (\pi)|^4  - 3 |a_v(\pi)|^2 +1).
\end{align*}

If $|a_v(\pi)| > Q \geq 2$, then $a_v(\eta) > s^4(\alpha + \beta (Q ^2 -1) + \gamma (Q ^4 -3 Q ^2 +1))$.

For some automorphic representation $\mu$ and non-negative real number $t$, define $T(\mu,t)$ to be the set of finite places $v$ at which $|a_v(\mu)| > t$. From~\cite{Ra97} we know that 
\begin{align*}
\overline{\delta}(T(\mu,t)) \leq \frac{-{\rm ord}_{s=1}L(s,\mu \times \widetilde{\mu})}{t ^2}
\end{align*}

Therefore, since $v \in T(\pi,Q) \Rightarrow v \in T(\eta, s^4\alpha + s^4\beta (Q ^2 -1) + s^4\gamma (Q ^4 -3 Q ^2 +1))$, we have 
\begin{align*}
\overline{\delta}(T(\pi,Q)) \leq \frac{s^8(\alpha ^2 + \beta ^2 + \gamma^2)}{(s^4\alpha + s^4\beta (Q ^2 -1) + s^4\gamma (Q ^4 -3 Q ^2 +1)) ^2}\\
\end{align*}

Choose $\alpha = 1$, $\beta = Q ^2 -1$, and $\gamma = Q ^4 -3 Q^2 + 1$ to get 
\begin{align*}
\overline{\delta}(T(\pi,Q)) \leq \frac{1}{1 + (Q ^2 -1)^2 + (Q ^4 -3 Q ^2 +1)^2}.
\end{align*}

 \end{proof}

\subsection{Case of central characters of order at least 6}
\label{mpf}
From here on, we assume that $\pi$ is non-self-dual and not of solvable polyhedral type, and we will fix an angle $\phi \in [0,2 \pi)$.
We will also make use of the notations $q_4,q_6,$ and $q_8$ from subsections \ref{sseck4}, \ref{sseck6}, and \ref{sseck8}, respectively. Later in the proof we will make the distinction between the cases $r < 6$ and $r \geq 6$.\\

Let 
\begin{align*}
A = A (\pi)  &:= \{v \not \in X\mid {\rm Re}(a_v(\pi)e^{-i \phi})>0 \},\\
B = B (\pi)&:= \{v \not \in X \mid {\rm Re}(a_v(\pi)e^{-i \phi}) \leq 0 \}.
\end{align*}
Given a set $S$ of finite places and a non-negative integer $t$,
we establish the notation
$${\rm ls} (S,t) := 
\limsup_{s \rightarrow 1^+} \left(\frac{\sum_{v \in S}{({\rm Re}(a_v (\pi)e^{-i \phi}))^t}{{\rm N}v^{-s}}}{\log (1/ (s-1))}\right)$$
and similarly 
\begin{align*}
{\rm li} (S,t) := 
\liminf_{s \rightarrow 1^+} \left(\frac{\sum_{v \in S}{({\rm Re}(a_v (\pi)e^{-i \phi}))^t}{{\rm N}v^{-s}}}{\log (1/ (s-1))}\right).
\end{align*}

We also note the following identities that will be referred to later:\\
Given real-valued functions $f,g$ and a point $w \in \R$, we have
\begin{align}\label{lslem}
\limsup_{s \rightarrow w} (f(w) + g(w)) \geq \limsup_{s \rightarrow w} f(w) + \liminf_{s \rightarrow w} g(w) \geq \liminf_{s \rightarrow w} (f(w) + g(w)).
\end{align}
Furthermore, if $f$ and $g$ are non-negative functions, then 
\begin{align}\label{lslem2}
\limsup_{s \rightarrow w} (f(w) \cdot g(w)) \leq \limsup_{s \rightarrow w} f(w) \cdot  \limsup_{s \rightarrow w} g(w).
\end{align}

From subsection~\ref{sseck4} we have that ${\rm ls}(\Sigma_F-X,4)={\rm li}(\Sigma_F-X,4) = q_4$, where $\Sigma_F$ is the set of places of $F$. Applying equation~(\ref{lslem}), we have
\begin{align*}
{\rm li}(A,4)= q_4-{\rm ls} (B,4) .
\end{align*}
We set $d:={\rm ls} (B,4)$. Define
$$S = S (\beta) := \{v \in A \mid ({\rm Re}(a_v (\pi)e ^{-i \phi}))^4  > (q_4-d)\beta\},$$ for some constant $\beta \leq 1$, where we make the assumption that $\overline{\delta}(S) < 1/m$, for some constant $m$.
Note that
\begin{align*}
{\rm li}(A-S,4) \leq (q_4-d)\beta \cdot \underline{\delta}(A-S).
\end{align*}
Using equation~(\ref{lslem}),
\begin{align*}
{\rm li}(A-S,4) + {\rm ls} (S,4) &\geq {\rm li}(A, 4) = q_4 -d \\
{\rm ls} (S, 4) &\geq \left(q_4-d\right)(1- \beta \underline{\delta}(A-S)).
\end{align*}
Applying equations (\ref{k8eq}) and (\ref{lslem2}),
\begin{align*}
{\rm ls} (S, 4)^2 &\leq {\rm ls} (S, 8) \cdot {\rm ls} (S,0) \\
\left(q_4-d\right)^2(1- \beta \underline{\delta}(A-S))^2 & \leq q_8 \cdot \overline{\delta}(S) 
\end{align*}
and from~(\ref{lslem}) we have $$\underline{\delta}(A-S) \leq \overline{\delta}(A) - \overline{\delta}(S),$$ so 
\begin{align}\notag 
\left(q_4-d\right)^2(1- \beta (\overline{\delta}(A) - \overline{\delta}(S)))^2 & \leq q_8 \cdot \overline{\delta}(S) \\
\left(q_4-d\right)^2(1- \beta (1 - \overline{\delta}(S)))^2 & \leq q_8 \cdot \overline{\delta}(S). \label{deseq1}
\end{align}

Now define 
\begin{align*}
T = T(\alpha) := \left\{v \in A \Biggm| ({\rm Re}(a_v (\pi)e ^{-i \phi})) ^3 \geq \alpha d ^{5/4} \left(q_8 - \left(q_4-d\right)^2\right)^{-1/4}\right\}
\end{align*}
for some constant $\alpha \leq 1$, and we make the assumption that  $\overline{\delta}(T) < 1/m$.
Note that
\begin{align*}
{\rm ls} (A-T,3) \leq \alpha d ^{5/4} \left(q_8 - \left(q_4-d\right)^2\right)^{-1/4} \overline{\delta}(A-T),
\end{align*}
Using the method from Section 3.1 
of~\cite{Wa18} applied to our setting, we deduce
\begin{align*}
{\rm ls} (A-T,3) + {\rm ls} (T,3) \geq d ^{5/4} \left(q_8 - \left(q_4-d\right)^2\right)^{-1/4}.
\end{align*}
Combining the two equations above, 
\begin{align*}
{\rm ls} (T,3) &\geq d ^{5/4} \left(q_8 - \left(q_4-d\right)^2\right)^{-1/4} (1- \alpha \overline{\delta}(A-T)),\\
{\rm ls} (T,3)^2 &\geq 
\left(\frac{d ^{5/4}}{ \left(q_8 - \left(q_4-d\right)^2\right)^{1/4}}\right)^2 (1- \alpha)^2 .
\end{align*}
From equation (\ref{k6eq}), we have 
\begin{align*}
{\rm ls} (T,3)^2 \leq {\rm ls} (T,6)\cdot {\rm ls} (T,0) \leq q_6 \cdot \overline{\delta}(T),
\end{align*}
and so
\begin{align}
\left(\frac{d ^{5/4}}{ \left(q_8 - \left(q_4-d\right)^2\right)^{1/4}}\right)^2 (1- \alpha)^2 \leq q_6 \cdot \overline{\delta}(T). \label{deseq2}
\end{align}

Given $\beta$, choose $\alpha$ such that 
\begin{align*}
\left(\left(q_4-d\right)\beta \right)^{1/4} = \left(\alpha \frac{d ^{5/4}}{ \left(q_8 - \left(q_4-d\right)^2\right)^{1/4}}\right)^{1/3}
\end{align*}

We now specify $r \geq 6$.
We therefore can set $q_4 = 3/4$, $q_6 = 25/16$, and $q_8 = 519/128$.
If we choose $\alpha$ and $\beta$ such that the upper Dirichlet densities of the sets $S$ and $T$ are bounded above by $1/234$, then the equations~(\ref{deseq1}) and~(\ref{deseq2}) imply that ($\beta =  0.4906 \dots$, $d= 0.4934 \dots$) is a boundary case. Therefore, there is an upper Dirichlet density of at least $1/234$ for the set of places $v \in A$ such that $${\rm Re}(a_v (\pi)e ^{-i \phi}) >  ((q_4-d)\beta)^{1/4} -\epsilon = 0.59566 \dots - \epsilon,$$
for any $\epsilon > 0$.

Recall that Lemma~\ref{kslem} states that for $Q \geq 2$ we have 
\begin{align*}
\overline{\delta}\{v \mid |a_v (\pi)| > Q\} \leq \frac{1}{1 + (Q ^2 -1)^2 + (Q ^4 -3 Q ^2 + 1)^2}.
\end{align*}
The right-hand side is smaller than $1/234$ when $Q > 2.341$. This implies that there is a positive upper Dirichlet density of places $v$ where $a_v (\pi) e^{-i \phi}$ lies in the region  $$\{z \in \C \mid {\rm Re}(ze ^{-i \phi})>0.59566 , |z| \leq 2.341 \}.$$ Note that $\cos ^{-1} (0.59566 / 2.341) = 1.31352$ radians
(which is equal to $75.259$ degrees). This means that there is a positive upper Dirichlet density of places $v$ whose associated Hecke eigenvalues whose argument is in the interval $$(-1.31353 - \phi , +1.31353 - \phi).$$

\subsection{Case of central characters of order at most five}\label{loworder}
We now assume that the central character $\omega$ of the cuspidal automorphic representation $\pi$ is of order less than six. 

We are handling these cases separately since our bounds for the asymptotic behaviour of various Dirichlet series from Section \ref{dsec} are less strong, and so would lead to a weaker result if we only relied on the proof for the $r \geq 6$ case in the previous two subsections.

At a finite place $v$ where $\pi$ is unramified, we have the associated multiset of Satake parameters $\{\alpha_v (\pi),\beta_v (\pi)\}$ where their product is equal to some (not necessarily primitive) $r$th root of unity $e^{i\mu }$, and their sum is equal to the Hecke eigenvalue $a_v(\pi)$. We write $\alpha_v (\pi) = \rho  e ^{i \theta}$ and $\beta_v(\pi) = \rho  ^{-1} e^{i (-\theta + \mu)}$, for some positive real number $\rho$ and some angle $\theta$.

Unitarity implies that 
\begin{align}\label{unit}
\{\rho e ^{-i \theta}, \rho ^{-1} e ^{i (\theta - \mu) }\}= \{\rho ^{-1} e ^{-i \theta}, \rho e ^{i (\theta - \mu) }\}.
\end{align}
If $\rho = 1$, then 
\begin{align*}
{\rm Re}(a_v(\pi)) = (1 + \cos \mu) \cos \theta + \sin \mu \sin \theta  \\
{\rm Im} (a_v(\pi)) = (1- \cos \mu)\sin \theta   + \sin \mu \cos \theta 
\end{align*}
and 
\begin{align*}
\frac{{\rm Im} (a_v(\pi))}{{\rm Re} (a_v(\pi))}= \frac{\sin \mu}{1 + \cos \mu}= \tan (\mu /2) ,
\end{align*}
 so ${\rm arg}(a_v(\pi)) = \mu /2 + n\pi$, for some integer $n$.\\
If $\rho \neq 1$, then equation (\ref{unit}) implies $e^{-i \theta}= e^{i (\theta - \mu)}$, so $\theta = \mu /2 + n \pi$ for some integer $n$. This again means 
\begin{align}\label{angleeq}
{\rm arg}(a_v(\pi)) = \mu /2 + n \pi.
\end{align}

We also want to apply the method of Subsection~\ref{mpf}. For each $r$ (and corresponding $q_4,q_6$ and $q_8$), we obtain a statement, for any angle $\phi \in [0,2 \pi)$, of the form 
\begin{align}\label{deqn}
\overline{\delta}(\{v \mid {\rm Re}(a_v(\pi) e ^{-i \phi})> T(r)\})> 0.
\end{align}
For $r = 5$, we set $q_4 = 3/4 $, $q_6 = 25/16$, and $q_8 = 583/128$, and obtain $T(5)=0.679$.
For $r = 4$, we set $q_4 = 3/4$, $q_6 = 25/16$, and $q_8 = 504/128$, and get $T(4) = 0.684$.
For $r = 3$, set $q_4 = 3/4$, $q_6 = 55/32$, and $q_8 = 602/128$, obtaining $T(3) = 0.678$.\\
In the case of $r = 2$, we set $q_4 = (3 + \cos 4 \phi)/4 $, $q_6 = 5/2$, and $q_8 = 7$, and obtain, for $\cos 4 \phi \geq -0.785$,
\begin{align*}
\overline{\delta}(\{v \mid {\rm Re}(a_v(\pi) e ^{-i \phi})> 0.5956\})> 0.
\end{align*}
If $\cos 4 \phi < -0.785$, then we  conclude that 
\begin{align*}
\overline{\delta}(\{v \mid {\rm Re}(a_v(\pi) e ^{-i \phi})> 0.5723 \})> 0
\end{align*}
and use of basic geometry in this setting then implies that $|a_v(\pi)|> 0.702$.

Applying the results from the above equations \ref{angleeq} and \ref{deqn} for suitable values of $r$ and $\phi$, we find that any sector of angle greater than $144^\circ$ (i.e., 2.51 radians) must contain a positive upper Dirichlet density of Hecke eigenvalues of size greater than $0.5956$, which proves Theorem~\ref{t1} for $r \leq 5$.

\subsection{Acknowledgements} The author would like to thank Dinakar Ramakrishnan for suggesting this problem.

\end{document}